\DeclareSymbolFont{cyrletters}{OT2}{wncyr}{m}{n}
\DeclareMathSymbol{\Sha}{\mathalpha}{cyrletters}{"58}
\numberwithin{equation}{section}
\title[Rational points on quadratic twists of a given elliptic curve]
{Height of rational points on quadratic twists of a given elliptic curve}
\author{Pierre Le Boudec}
\subjclass{$11$D$45$, $11$G$05$, $14$G$05$}
\keywords{Elliptic curves, quadratic twists, rational points, canonical height}
\address{EPFL SB MATHGEOM TAN \\ MA C$3$ $604$ (B\^{a}timent MA)\\ Station $8$ \\ \text{CH-$1015$} Lausanne \\ Switzerland}
\email{pierre.leboudec@epfl.ch}
\begin{document}

\makeatletter
\def\imod#1{\allowbreak\mkern10mu({\operator@font mod}\,\,#1)}
\makeatother

\newtheorem{lemma}{Lemma}
\newtheorem{theorem}{Theorem}
\newtheorem{corollary}{Corollary}
\newtheorem{proposition}{Proposition}
\newtheorem{conjecture}{Conjecture}
\newtheorem{conj}{Conjecture}
\renewcommand{\theconj}{\Alph{conj}}

\newcommand{\vol}{\operatorname{vol}}
\newcommand{\D}{\mathrm{d}}
\newcommand{\rank}{\operatorname{rank}}
\newcommand{\Pic}{\operatorname{Pic}}
\newcommand{\Gal}{\operatorname{Gal}}
\newcommand{\meas}{\operatorname{meas}}
\newcommand{\Spec}{\operatorname{Spec}}
\newcommand{\eff}{\operatorname{eff}}
\newcommand{\rad}{\operatorname{rad}}
\newcommand{\sq}{\operatorname{sq}}
\newcommand{\tors}{\operatorname{tors}}
\newcommand{\Cl}{\operatorname{Cl}}

\begin{abstract}
We formulate a conjecture about the distribution of the canonical height of the lowest non-torsion rational point on a quadratic twist of a given elliptic curve, as the twist varies. This conjecture seems to be very deep and we can only prove partial results in this direction.
\end{abstract}

\maketitle

\tableofcontents

\section{Introduction}

\subsection{Rational points on quadratic twists}

Let $E$ be the elliptic curve defined over $\mathbb{Q}$ by the Weierstrass equation
\begin{equation*}
y^2 = x^3 + A x + B,
\end{equation*}
where $(A,B) \in \mathbb{Z}^2$ satisfies $4 A^3 + 27 B^2 \neq 0$. For every squarefree integer $d \geq 1$, we denote by
$E_d$ the quadratic twist of $E$ defined over $\mathbb{Q}$ by the equation
\begin{equation}
\label{Elliptic equation}
d y^2 = x^3 + A x + B.
\end{equation}
From now on, we view $A$ and $B$ as being fixed, and $d$ as a varying parameter. In particular, the dependences on $A$ and $B$ of the constants involved in the notations $O$, $\ll$ and $\gg$ will not be specified.

The celebrated Mordell-Weil Theorem states that the abelian group $E_d(\mathbb{Q})$ is finitely generated. In other words, there exists a non-negative integer $\rank E_d(\mathbb{Q})$, the algebraic rank of the curve $E_d$ over $\mathbb{Q}$, such that
\begin{equation*}
E_d(\mathbb{Q}) \simeq E_d(\mathbb{Q})_{\tors} \times \mathbb{Z}^{\rank E_d(\mathbb{Q})},
\end{equation*}
where $E_d(\mathbb{Q})_{\tors}$ is a finite abelian group.

Let $\hat{h}_{E_d}$ be the canonical height on $E_d$. The goal of this article is to study the distribution, as $d$ varies, of the quantity $\eta_d(A,B)$ defined by
\begin{equation*}
\log \eta_d(A,B) = \min \{ \hat{h}_{E_d}(P), P \in E_d(\mathbb{Q}) \setminus E_d(\mathbb{Q})_{\tors} \},
\end{equation*}
if $\rank E_d(\mathbb{Q}) \geq 1$ and $\eta_d(A,B) = \infty$ if $\rank E_d(\mathbb{Q}) = 0$.

Let us recall the conjecture of Goldfeld (see \cite{MR564926}) about the average order of $\rank E_d(\mathbb{Q})$ as $d$ varies. Let $\mathcal{S}(X)$ be the set of positive squarefree integers up to $X$. Goldfeld's Conjecture states that
\begin{equation}
\label{Goldfeld}
\sum_{d \in \mathcal{S}(X)} \rank E_d(\mathbb{Q}) \sim \frac1{2} \# \mathcal{S}(X). 
\end{equation}
Let $L(E_d,s)$ denote the Hasse-Weil $L$-function associated to the curve $E_d$ and let
$\rank_{\textrm{an}} E_d(\mathbb{Q})$ be the order of the zero of $L(E_d,s)$ at the central point. Recall that the Parity Conjecture asserts that $\rank E_d(\mathbb{Q}) = \rank_{\textrm{an}} E_d(\mathbb{Q}) \imod{2}$. Together with the conjectural estimate \eqref{Goldfeld}, it implies that, for $\iota \in \{ 0, 1 \}$, we have
\begin{equation}
\label{rank at most 1}
\# \{d \in \mathcal{S}(X), \rank E_d(\mathbb{Q}) = \iota \} \sim \frac1{2} \# \mathcal{S}(X),
\end{equation}
and
\begin{equation}
\label{rank at least 2}
\# \{d \in \mathcal{S}(X), \rank E_d(\mathbb{Q}) \geq 2 \} = o(X).
\end{equation}
The estimates \eqref{rank at most 1} and \eqref{rank at least 2} are widely believed. In particular, they are supported by the Katz-Sarnak Philosophy (see \cite{MR1659828}) about zeros of $L$-functions and also by Random Matrix Theory heuristics (see for instance \cite{MR1956231}).

The conjectural estimate \eqref{rank at least 2} states that the proportion of curves $E_d$ whose rank is at least $2$ is negligible, and we work under the convention that $\eta_d(A,B) = \infty$ if $\rank E_d(\mathbb{Q}) = 0$. As a result, in what follows, we restrict our investigation of $\eta_d(A,B)$ to the curves $E_d$ which have rank $1$.

\subsection{Analogy between quadratic twists and number fields - A Conjecture}

It is very instructive to describe the analogy between quadratic twists of a given elliptic curve and number fields (see for instance \cite[Section $1$]{MR2322355}). According to this analogy, rank one quadratic twists correspond to real quadratic fields, and the equation \eqref{Elliptic equation} corresponds to the Pell equation.

Let $D \geq 1$ be a fundamental discriminant, and let $\Cl(D)$ and $\varepsilon_D$ respectively denote the class group and the fundamental unit of the real quadratic field $\mathbb{Q}(\sqrt{D})$. Describing precisely the distribution of $\varepsilon_D$ is considered as being extremely difficult, in particular because it is linked to the celebrated Class Number One problem for real quadratic fields. Indeed, if we let $\mathcal{D}(X)$ be the set of positive fundamental discriminants up to $X$, then it is known (see \cite{MR1210518}) that there exists a constant $C>0$ such that
\begin{equation}
\label{Pell average}
\sum_{D \in \mathcal{D}(X)} \# \Cl(D) \log \varepsilon_D \sim C X^{3/2}.
\end{equation}
Let us note that the corresponding formula for positive discriminants (not necessarily fundamental) goes back to Siegel \cite{MR0012642}. In the asymptotic formula \eqref{Pell average}, the two quantities $\# \Cl(D)$ and $\log \varepsilon_D$ are inextricably mixed and no one has ever been able to separate them. 

At the beginning of the eighties, Hooley \cite{MR765829} and Sarnak \cite{MR675187}, \cite{MR814010} have, at the same time but independently, studied this problem. Their investigations led people to believe that, most of the time, $\varepsilon_D$ should be huge compared to $D$. In particular, as recently remarked by Fouvry and Jouve (see \cite[Equation $(3)$]{MR3069056}), their conjectures would imply the following.

\begin{conj}
\label{Pell Conjecture}
Let $\varepsilon > 0$ be fixed. For almost every fundamental discriminant $D \geq 1$, we have
\begin{equation*}
\varepsilon_D > e^{D^{1/2 - \varepsilon}}.
\end{equation*}
\end{conj}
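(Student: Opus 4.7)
The plan is to reduce Conjecture~\ref{Pell Conjecture} to a density-one upper bound on the class number $\#\Cl(D)$, and then to attempt such a bound using \eqref{Pell average} in combination with moment estimates for $L$-functions. Dirichlet's class number formula for the real quadratic field $\mathbb{Q}(\sqrt{D})$ reads
\begin{equation*}
\#\Cl(D) \log \varepsilon_D = \frac{\sqrt{D}}{2} L(1, \chi_D),
\end{equation*}
and results of Elliott on the distribution of $L(1,\chi_D)$ give $L(1,\chi_D) = D^{o(1)}$ on a density-one subset of $\mathcal{D}$. On this subset, the inequality $\log \varepsilon_D \leq D^{1/2-\varepsilon}$ forces $\#\Cl(D) \geq D^{\varepsilon/2+o(1)}$, so Conjecture~\ref{Pell Conjecture} would follow from the density-one estimate
\begin{equation*}
\# \{ D \in \mathcal{D}(X) : \#\Cl(D) \geq D^\varepsilon \} = o(X),
\end{equation*}
which is the real-quadratic case of the Cohen-Lenstra heuristics.

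To attempt this bound I would first observe that the trivial inequality $\log \varepsilon_D \gg \log D$ combined with \eqref{Pell average} yields only $\sum_{D \in \mathcal{D}(X)} \#\Cl(D) \ll X^{3/2}/\log X$, which is far too weak. To do better, one may dyadically partition $\mathcal{D}(X)$ according to the size of $\log \varepsilon_D$ and apply Cauchy-Schwarz against Jutila's second moment bound $\sum_{D \in \mathcal{D}(X)} L(1, \chi_D)^2 \ll X \log^3 X$. An alternative and more geometric route is to count integral solutions of the Pell equation $x^2 - Dy^2 = 1$ with $D \in \mathcal{D}(X)$ and $1 \leq y \leq Y$, for $Y = \exp(X^{1/2-\varepsilon/2})$: each $D$ failing Conjecture~\ref{Pell Conjecture} then contributes $\gg X^{\varepsilon/2}$ solutions to this count, while the total is in principle accessible by lattice-point methods applied to the level sets $\{(x, y) : x^2 - D y^2 = 1\}$ averaged over $D$.

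The hardest step, and the essential reason the conjecture is out of reach, lies in the passage from moment data to a \emph{pointwise} upper bound $\#\Cl(D) \leq D^\varepsilon$ valid for almost every $D$. Current methods, based for example on genus theory or on the Davenport-Heilbronn counting of cubic fields refined by Bhargava and collaborators, control only the $\ell$-torsion $\#\Cl(D)[\ell]$ for small fixed $\ell$, whereas what is required is a bound on the full order of the class group. In the geometric approach, the available lattice-point bounds on Pell solutions save only a polynomial in $\log Y$, leaving a discrepancy of a power of $X^{1/2}$ between what can be proved and what would be needed to close the loop. Both routes thus collide head-on with the Class Number One problem for real quadratic fields, and any genuine approach to Conjecture~\ref{Pell Conjecture} appears to demand a substantial new input on the distribution of class groups rather than a further refinement of known techniques.
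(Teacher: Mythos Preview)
The statement you were asked to address is Conjecture~\ref{Pell Conjecture}, and the paper does \emph{not} prove it; on the contrary, the paper explicitly says that ``Conjecture~\ref{Pell Conjecture} is far out of reach'' and only records the much weaker unconditional results of Hooley, Fouvry--Jouve, and Reuss (exponents $3/2$, $7/4$, $3$ in place of $D^{1/2-\varepsilon}$ inside the exponential). So there is no proof in the paper to compare your proposal against.

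Your write-up is not a proof either, and you are candid about this: you reduce the conjecture via the class number formula to a density-one bound $\#\Cl(D)\leq D^{\varepsilon}$, sketch two lines of attack (moments of $L(1,\chi_D)$, and counting Pell solutions), and then correctly explain why both collapse into the Class Number One problem for real quadratic fields. That diagnosis is accurate and matches the paper's own assessment of the difficulty. But as a ``proof proposal'' it has a fundamental gap by design: the key step --- showing that $\#\Cl(D)\leq D^{\varepsilon}$ for almost all $D$ --- is precisely the open problem, and nothing in your outline closes it. If the intent was to prove the conjecture, the proposal fails at that step; if the intent was to explain why the conjecture is open, then your discussion is reasonable and in line with the paper, though the paper itself offers no such extended heuristic and simply states the conjecture as motivation for its elliptic-curve analogue.
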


Let us note that Conjecture \ref{Pell Conjecture} and the asymptotic formula \eqref{Pell average} agree with the Cohen-Lenstra heuristics \cite{MR756082} which predict that $\# \Cl(D)$ should be small very often, and even equal to $1$ for a positive proportion of $D$'s.

Let us now explain why $\varepsilon_D$ and $\eta_d(A,B)$ should have similar distributions. We recall that we are only concerned with the curves $E_d$ whose rank is equal to $1$.

An asymptotic formula analog of \eqref{Pell average} conjecturally arises from averaging over squarefree integers $d \geq 1$ the central values $L'(E_d,1/2)$. Indeed, it is known that the average order of $L'(E_d,1/2)$ has size $\log d$ (see \cite{MR1038358}, \cite{MR1109350} and \cite{MR1081731}). In addition, recall that the full Birch and Swinnerton-Dyer Conjecture predicts that $L'(E_d,1/2)$ is essentially equal to $d^{-1/2} \# \Sha(E_d) \log \eta_d(A,B)$, where $\Sha(E_d)$ denotes the
Tate-Shafarevich group of the curve $E_d$. Therefore, it is reasonable to expect that there exists a constant $C_E>0$ such that
\begin{equation}
\label{Elliptic average}
\sum_{\substack{d \in \mathcal{S}(X) \\ \rank E_d(\mathbb{Q}) = 1}} \# \Sha(E_d) \log \eta_d(A,B) \sim C_E X^{3/2} \log X.
\end{equation}

The similarities between the asymptotic formulas \eqref{Pell average} and \eqref{Elliptic average} are remarkable. In particular, the two quantities $\# \Sha(E_d)$ and $\log \eta_d(A,B)$ also seem to be very hard to separate.

Delaunay \cite{MR1837670} has carried out the Cohen-Lenstra heuristics to determine the distribution of $\# \Sha(E_d)$ for curves $E_d$ which have rank $1$. He obtained that $\# \Sha(E_d)$ should be small very often, and even equal to $1$ for a positive proportion of $d$'s. In addition, it is to be noted that the recent work of Bhargava, Kane, Lenstra, Poonen and Rains \cite{Bhargava}, which uses different methods, leads to the same predictions.

These observations led Delaunay \cite[Conjecture $7.1$]{MR2173383} to conjecture that the average order of $\log \eta_d(A,B)$ for curves $E_d$ with rank equal to $1$ should be at least $d^{1/2 - c / \log \log d}$ for some absolute constant $c > 0$. Guided by the analogy described above and Conjecture~\ref{Pell Conjecture}, we go further in this direction and conjecture that for any fixed
$\varepsilon > 0$, almost every squarefree integer $d \geq 1$ for which $\rank E_d(\mathbb{Q}) = 1$ satisfies
\begin{equation*}
\eta_d(A,B) > e^{d^{1/2 - \varepsilon}}.
\end{equation*}

As previously explained, the proportion of curves with rank at least $2$ is conjectured to be negligible so we are led to the following analog of Conjecture \ref{Pell Conjecture}.

\begin{conjecture}
\label{Main Conjecture}
Let $(A,B) \in \mathbb{Z}^2$ be such that $4 A^3 + 27 B^2 \neq 0$, and let $\varepsilon > 0$ be fixed. For almost every squarefree integer $d \geq 1$, we have
\begin{equation*}
\eta_d(A,B) > e^{d^{1/2 - \varepsilon}}.
\end{equation*}
\end{conjecture}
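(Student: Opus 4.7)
The aim is to bound the size of the exceptional set
\[
\mathcal{E}(X) = \{d \in \mathcal{S}(X) : \rank E_d(\mathbb{Q}) = 1 \text{ and } \log \eta_d(A,B) \leq d^{1/2-\varepsilon}\}
\]
and to show that $\#\mathcal{E}(X) = o(X)$; since \eqref{rank at least 2} is expected to contribute $o(X)$ and rank $0$ gives $\eta_d = \infty$, this suffices. For $d \in \mathcal{E}(X)$, writing the smallest non-torsion point of $E_d(\mathbb{Q})$ as $(u/w^2, v/w^3)$ with $\gcd(u,w) = 1$ and $w \geq 1$ produces an integer solution of
\[
d v^2 = u^3 + A u w^4 + B w^6
\]
with $\max(|u|, w^2) \leq \exp(C d^{1/2-\varepsilon})$, where $C = C(A,B)$ comes from the standard comparison between the canonical and the naive height on $E_d$ (the additional $O(\log d)$ discrepancy from the discriminant of $E_d$ being absorbed into $C$ for $d$ large). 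The variables live in an exponentially large box in $X$, so a direct lattice-point count on this family of Weierstrass equations has no chance of success.

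\textbf{Averaging strategy.} A realistic approach mirrors the Hooley--Sarnak analysis of Pell's equation. It combines three conjectural inputs: (i) the full Birch and Swinnerton-Dyer formula applied to each rank-one twist, yielding \eqref{Elliptic average}; (ii) the Delaunay and Bhargava--Kane--Lenstra--Poonen--Rains heuristics predicting $\#\Sha(E_d) \ll d^{\varepsilon}$ for almost every rank-one twist; and (iii) the distribution \eqref{rank at most 1}. Combining (i)--(iii) one obtains
\[
\sum_{\substack{d \in \mathcal{S}(X) \\ \rank E_d(\mathbb{Q}) = 1}} \log \eta_d(A,B) \gg X^{3/2 - \varepsilon},
\]
so the \emph{average} of $\log \eta_d$ over rank-one twists is $\gg X^{1/2-\varepsilon}$, in agreement with the pointwise prediction of Conjecture~\ref{Main Conjecture}.

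\textbf{From average to almost-all, and main obstacle.} The average lower bound does not by itself imply that $\log \eta_d > d^{1/2-\varepsilon}$ for almost every $d$: it must be ruled out that the sum is carried by a thin subfamily of $d$'s with abnormally large $\eta_d$. The natural substitute is an upper bound of the shape $\log \eta_d(A,B) \ll d^{1/2 + \varepsilon}$ valid for almost every rank-one $d$, which, combined with the lower bound on the average, would force concentration in a narrow window around $d^{1/2}$ for almost every $d$. Via BSD, such an upper bound is equivalent to an ineffective Siegel-type lower bound $L'(E_d, 1/2) \gg d^{-\varepsilon}$ together with a matching lower bound on $\#\Sha(E_d)$ on average. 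The principal obstacle is therefore fourfold: BSD, Goldfeld, the Cohen--Lenstra heuristics for $\Sha$, and a yet further non-trivial input of the same quality are all needed, and each is far out of reach. Unconditional techniques (descent, linear forms in elliptic logarithms, Silverman-type height-difference estimates) currently yield only $\log \eta_d(A,B) \gg \log d$, leaving an exponential gap to the $d^{1/2-\varepsilon}$ target, which is why Conjecture~\ref{Main Conjecture} should be regarded as genuinely beyond present technology and the above plan as a conditional roadmap rather than a viable strategy.
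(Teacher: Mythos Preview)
The statement is a \emph{conjecture}: the paper does not prove it, offers no proof sketch, and explicitly presents it as an analogue of Conjecture~\ref{Pell Conjecture}, which it describes as ``far out of reach.'' There is therefore no proof in the paper to compare your proposal against. The paper's actual results (Theorems~\ref{Theorem general} and~\ref{Theorem congruent}) only establish that $\eta_d(A,B)$ exceeds a small power of $d$ for almost every $d$, which is exponentially weaker than the conjectured bound.

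Your write-up is not a proof but a heuristic roadmap, and you are right to frame it that way. The averaging argument you sketch---combining the conjectural asymptotic \eqref{Elliptic average} with Delaunay/BKLPR heuristics on $\Sha$ and Goldfeld---is exactly the reasoning the paper uses to \emph{motivate} the conjecture in the introduction, and your identification of the obstacle (passing from an average lower bound to an almost-all statement requires ruling out concentration on a thin set, which in turn needs an almost-sure upper bound of Siegel type on $L'(E_d,1/2)^{-1}$ or on $\log\eta_d$) is accurate. One minor correction: your height comparison $\max(|u|,w^2)\le\exp(Cd^{1/2-\varepsilon})$ is fine, but the paper's Lemma~\ref{Heights Link} shows the discrepancy between $\hat h_{E_d}$ and $\tfrac12 h_x$ is $O(1)$ uniformly in $d$, so no $O(\log d)$ term needs to be absorbed. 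In any case, since the paper contains no proof of this statement, the appropriate verdict is that your proposal correctly recognises the conjectural status and the depth of the obstructions, rather than that it matches or diverges from a nonexistent argument.
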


Lang conjectured an upper bound for the canonical height of the lowest non-torsion rational point on an elliptic curve (see \cite[Conjecture $3$]{MR717593}), and it is implicit in his work that this upper bound should be almost optimal for most curves. It is worth noting that Conjecture~\ref{Main Conjecture} is in agreement with this general philosophy.

Conversely, Conjecture \ref{Main Conjecture} gives conjectural information about the size of $\# \Sha(E_d)$ for curves $E_d$ which have rank $1$. More precisely, if we assume the full Birch and Swinnerton-Dyer Conjecture, and also that a positive proportion of curves $E_d$ have rank $1$, and finally Conjecture \ref{Main Conjecture}, then one can show that for any fixed
$\varepsilon > 0$, almost every squarefree integer $d \geq 1$ such that $\rank E_d(\mathbb{Q}) = 1$ satisfies
\begin{equation*}
\# \Sha(E_d) < d^{\varepsilon}.
\end{equation*}

\subsection{Results towards Conjecture \ref{Pell Conjecture} and Conjecture \ref{Main Conjecture}}

Conjecture \ref{Pell Conjecture} is far out of reach. Indeed, Hooley \cite[Corollary of Theorem $1$]{MR765829} was only able to prove that for any fixed $\varepsilon > 0$, almost every discriminant (not necessarily fundamental) $D \geq 1$ satisfies
$\varepsilon_D > D^{3/2 - \varepsilon}$. Then, Fouvry and Jouve \cite[Corollary $1$]{MR3030681} improved the exponent $3/2$ to $7/4$ and recently, Reuss \cite[Corollary $6$]{Reuss} improved it to $3$. This should be compared with the trivial lower bound
$\varepsilon_D \gg D^{1/2}$.

The modesty of these results is a good clue of how deep Conjecture \ref{Pell Conjecture} must lie. The goal of this article is to establish analogs of these results for our problem.

It is easy to check that for every squarefree integer $d \geq 1$, we have $\eta_d(A,B) \gg d^{1/8}$ (see Section
\ref{Heights}). In addition, we will see that this lower bound is best possible. Note that Silverman has proved that we always have such a lower bound for twists of abelian varieties in general (see \cite[Theorem $6$]{MR747871}).

In the general case, we can prove the following result.

\begin{theorem}
\label{Theorem general}
Let $(A,B) \in \mathbb{Z}^2$ be such that $4 A^3 + 27 B^2 \neq 0$, and let $\varepsilon > 0$ be fixed. For almost every squarefree integer $d \geq 1$, we have
\begin{equation*}
\eta_d(A,B) > d^{1/4 - \varepsilon}.
\end{equation*}
\end{theorem}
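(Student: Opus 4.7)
The strategy is to bound the exceptional set
\begin{equation*}
\mathcal{B}(X) := \{d \in \mathcal{S}(X) : \eta_d(A,B) \leq d^{1/4 - \varepsilon}\}
\end{equation*}
and show that $\#\mathcal{B}(X) = o(X)$. The starting point is the familiar parametrization: a non-torsion point $P \in E_d(\mathbb{Q})$ can be uniquely written as $P = (u/w^2, v/w^3)$ with $u, w \in \mathbb{Z}$, $w \geq 1$, $\gcd(u,w)=1$, and $v \in \mathbb{Z}\setminus\{0\}$, the integrality of $v$ being a direct consequence of the squarefreeness of $d$. Equation~\eqref{Elliptic equation} then takes the form
\begin{equation*}
d v^2 = u^3 + A u w^4 + B w^6,
\end{equation*}
and again because $d$ is squarefree, the pair $(d, |v|)$ is uniquely determined by $(u, w)$: $d$ is the squarefree part and $v^2$ the square part of the right-hand side.

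I would next invoke the height analysis of Section~\ref{Heights} to convert the hypothesis $\hat h_{E_d}(P) \leq (1/4 - \varepsilon) \log d$ into a size bound of the shape
\begin{equation*}
\max(|u|, w^2) \ll d^{\theta}
\end{equation*}
for some exponent $\theta < 2/3$, with an implied constant uniform in $d$. This reduces the problem to a counting step: each $d \in \mathcal{B}(X)$ must arise from at least one pair $(u, w) \in \mathbb{Z}^2$ with $\max(|u|, w^2) \ll X^{\theta}$, whence
\begin{equation*}
\#\mathcal{B}(X) \ll \#\bigl\{(u, w) \in \mathbb{Z}^2 : \max(|u|, w^2) \ll X^{\theta}\bigr\} \ll X^{3\theta/2} = o(X).
\end{equation*}

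The main obstacle is the height-to-size conversion. Silverman's classical comparison between the canonical and naive heights on $E_d$ carries an error of order $\log|\Delta_{E_d}| \asymp \log d$, which would destroy the exponent $1/4 - \varepsilon$ altogether. A refined comparison tailored to the twist family $\{E_d\}$, with $d$-independent error, is precisely what pins down the exponent in Theorem~\ref{Theorem general}: any improvement in the height bound translates directly into an improvement of the theorem, and conversely the shape of the exponent $1/4 - \varepsilon$ is essentially forced by the balance between the available height comparison and the trivial bound $\#\{(u,w) : \max(|u|, w^2) \ll X^{\theta}\} \asymp X^{3\theta/2}$ in the counting step.
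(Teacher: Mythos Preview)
Your overall strategy is the same as the paper's: invoke a $d$-uniform comparison between $\hat h_{E_d}$ and the naive height of the $x$-coordinate (this is precisely Lemma~\ref{Heights Link}), and then count the possible $x$-coordinates. You are also right that the $d$-independence of the error term is the crux of the matter.

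There is, however, a genuine error in your parametrization. On the model $dy^2=x^3+Ax+B$ the $x$-coordinate of a rational point does \emph{not} in general have square denominator. By Lemma~\ref{Descent 0} the reduced denominator is $d_1b_1^2$ with $d_1\mid d$ squarefree, and $d_1>1$ certainly occurs: for instance $(-1/2,1/4)$ lies on $6y^2=x^3-x$, with denominator~$2$. The shape $(u/w^2,v/w^3)$ is valid on the standard model $Y^2=X^3+d^2AX+d^3B$, but on that model the naive--canonical comparison carries exactly the $O(\log d)$ error you (correctly) warn against in your last paragraph. You cannot have both the square-denominator parametrization and the $d$-uniform height bound on the same model.

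This changes the arithmetic of the counting step. Writing the $x$-coordinate as $n/m$ in lowest terms, the bound coming from Lemma~\ref{Heights Link} is $\max(|n|,m)\ll X^{\theta}$ with $\theta=\tfrac12-2\varepsilon$, and since $m$ ranges over all positive integers (not just squares) the trivial count is $\ll X^{2\theta}$, not $X^{3\theta/2}$. This is exactly the bound $X^{1/2+4\alpha}$ of Lemma~\ref{Lemma Theorem general}. One therefore needs $\theta<1/2$, not $\theta<2/3$, and the exponent $1/4$ in Theorem~\ref{Theorem general} is sharp for this method. As written, your argument would in fact deliver the stronger exponent $1/3-\varepsilon$, which this approach cannot reach.
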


The main purpose of this article is to study an example for which Theorem \ref{Theorem general} can be improved. More precisely, we consider the elliptic curve linked to the congruent number problem, that is to say the case $(A,B) = (-1,0)$. However, it is worth pointing out that our method would actually apply to any elliptic curve with full rational $2$-torsion. We obtain the following result.

\begin{theorem}
\label{Theorem congruent}
Let $\varepsilon > 0$ be fixed. For almost every squarefree integer $d \geq 1$, we have
\begin{equation*}
\eta_d(-1,0) > d^{5/8 - \varepsilon}.
\end{equation*}
\end{theorem}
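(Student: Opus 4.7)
The plan is to perform a complete $2$-descent on the curve $E_d \colon dy^2 = x(x-1)(x+1)$, whose full rational $2$-torsion makes the descent particularly transparent. Writing a non-torsion rational point $P \in E_d(\mathbb{Q})$ as $P = (a/e^2, b/e^3)$ in lowest terms (with $\gcd(a, e) = 1$), the defining equation becomes $d b^2 = a(a - e^2)(a + e^2)$, and the three factors on the right are pairwise coprime up to bounded powers of $2$. Complete $2$-descent then produces a decomposition
\begin{equation*}
a = \epsilon_1 d_1 r^2, \quad a - e^2 = \epsilon_2 d_2 s^2, \quad a + e^2 = \epsilon_3 d_3 t^2,
\end{equation*}
with signs $\epsilon_i \in \{\pm 1\}$, positive squarefree integers $d_1, d_2, d_3$ pairwise coprime up to bounded powers of $2$ and with $d_1 d_2 d_3 = d$ up to a bounded square factor, and integers $r, s, t$. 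Subtracting pairs of these relations yields the Diophantine system
\begin{equation*}
\epsilon_1 d_1 r^2 - \epsilon_2 d_2 s^2 = e^2, \qquad \epsilon_3 d_3 t^2 - \epsilon_1 d_1 r^2 = e^2.
\end{equation*}

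Next I would invoke the height comparison developed in Section \ref{Heights} to translate the hypothesis $\eta_d(-1, 0) \leq d^{5/8 - \varepsilon}$ into the Weil-height bound $\max(|a|, e^2) \leq d^{5/4 - 2\varepsilon + o(1)}$, whence each $d_i u_i^2 \leq d^{5/4 - 2\varepsilon + o(1)}$ for $u_i \in \{r, s, t\}$ and $e \leq d^{5/8 - \varepsilon + o(1)}$. The number of squarefree $d \leq X$ in the exceptional set is then bounded above by the number of tuples $(\epsilon_1, \epsilon_2, \epsilon_3, d_1, d_2, d_3, r, s, t, e)$ satisfying the descent system together with the above size bounds and the constraint $d_1 d_2 d_3 \leq X$. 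A key observation is that, once $(d_1, r, e)$ and the signs are fixed, the relations
\begin{equation*}
d_2 s^2 = \pm(d_1 r^2 - e^2), \qquad d_3 t^2 = \pm(d_1 r^2 + e^2)
\end{equation*}
determine $(d_2, s)$ and $(d_3, t)$ uniquely (up to signs of $s$ and $t$), by the requirement that $d_2$ and $d_3$ be squarefree. The counting therefore reduces to bounding the number of triples $(d_1, r, e)$ in the relevant ranges for which the product of $d_1$ with the squarefree parts of $d_1 r^2 \pm e^2$ does not exceed $X$.

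The main obstacle is to establish that this reduced count is $o(X)$. In the \emph{generic} regime where the squarefree parts of $d_1 r^2 \pm e^2$ are of the same order as $d_1 r^2 \pm e^2$ themselves, the constraint $d_1 d_2 d_3 \leq X$ becomes essentially $d_1^3 r^4 \lesssim X$ when $d_1 r^2 \gg e^2$, and a routine summation over $(d_1, r, e)$ under the size bounds yields a contribution of order $O(X^{23/24 + o(1)})$. The delicate contribution comes from the \emph{atypical} range where at least one of $d_1 r^2 \pm e^2$ carries an unusually large square factor. To handle it, I would write $d_1 r^2 - e^2 = \rho_1^2 d_2$ and $d_1 r^2 + e^2 = \rho_3^2 d_3$ with $d_2, d_3$ squarefree, treat $\rho_1, \rho_3$ as new summation variables, and bound the number of $(d_1, r, e, \rho_1, \rho_3)$ satisfying the quadratic congruences $d_1 r^2 \equiv e^2 \pmod{\rho_1^2}$ and $d_1 r^2 \equiv -e^2 \pmod{\rho_3^2}$ by means of the Chinese Remainder Theorem and a dyadic decomposition over $\rho_1, \rho_3$. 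The exponent $5/8$ then emerges from balancing these two regimes against each other; this balance, together with the use of the full $2$-torsion to produce the three-factor decomposition, constitutes the technical heart of the argument and is what permits the improvement over the exponent $1/4$ of Theorem \ref{Theorem general}.
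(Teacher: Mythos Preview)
Your descent is set up on the wrong footing, and the counting step that actually produces the exponent $5/8$ is missing its key ingredient.

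\textbf{Parametrisation.} For the model $dy^2=x^3-x$ it is \emph{not} true that a rational point can be written as $(a/e^2,b/e^3)$ with $\gcd(a,e)=1$: the denominator of $x$ need not be a perfect square. (For $d=5$ one has the point $x=-4/5$.) The paper's Lemma~\ref{Descent 0} shows the correct shape is $z=d_1^2 b_1^3$ with $d_1\mid d$ squarefree, so that the descent in Lemma~\ref{Descent} splits $d$ into \emph{four} factors $d=d_1d_2d_3d_4$ with the pair of ternary equations \eqref{System 1}--\eqref{System 2}. Your three-factor split is the special case $d_1=1$ and misses genuine points; this could be repaired, but the repaired version is exactly the paper's descent.

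\textbf{The counting.} Fixing two of the pairs $(d_i,b_i)$ and observing that the remaining two are determined is already used in the proof of Lemma~\ref{Lemma Theorem general}; by itself it only recovers $\mathcal{N}_\alpha^\ast\ll X^{1/2+4\alpha}$, i.e.\ Theorem~\ref{Theorem general}. The improvement to $X^{1/2+\alpha+\varepsilon}$ comes from a genuinely different input: the geometry-of-numbers bound of Lemma~\ref{geometry lemma}, which gives
\[
\#\{(\mathbf{u},\mathbf{v}):\ f_1u_1v_1^2+f_2u_2v_2^2+f_3u_3v_3^2=0,\ |u_i|\le U_i,\ |v_i|\le V_i\}\ll (U_1U_2U_3)^{2/3+\varepsilon}(V_1V_2V_3)^{1/3}.
\]
Applied once to each of the four ternary equations obtainable from \eqref{System 1'}--\eqref{System 4'} and multiplied together, this yields $\mathcal{N}\ll X^{\varepsilon}(D_1D_2D_3D_4)^{1/2}(B_1B_2B_3B_4)^{1/4}$, and the conditions $D_1D_2D_3D_4\le X$, $D_iB_i^2\ll X^{1/4+2\alpha}$ then give $X^{1/2+\alpha+\varepsilon}$ directly, with no case distinction.

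Your proposal replaces this by a generic/atypical split, handling large square factors in $d_1r^2\pm e^2$ via congruences $d_1r^2\equiv \pm e^2\pmod{\rho^2}$ and CRT. That is precisely the hard regime, and you have not carried it out: one must control, uniformly over dyadic ranges of $\rho$ and of all the $d_i,b_i$, the number of $(r,e)$ in a box lying on $O(2^{\omega(\rho)})$ residue classes modulo $\rho^2$, and then still exploit the constraint $d_1d_2d_3\le X$. Nothing in the sketch indicates how the bookkeeping closes with a saving beyond $X^{1/2+4\alpha}$. The ``balancing'' that you allude to is, in the paper, entirely absorbed by Lemma~\ref{geometry lemma}; without that lemma or a concrete substitute, the argument as it stands does not establish the exponent $5/8$.
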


To establish Theorems \ref{Theorem general} and \ref{Theorem congruent}, one is led to investigate the cardinalities
\begin{equation}
\label{Definition N}
\mathcal{N}_{\alpha}(A,B;X) = \# \{ d \in \mathcal{S}(X), \eta_d(A,B) \leq d^{1/8 + \alpha} \},
\end{equation}
and
\begin{equation}
\label{Definition N all}
\mathcal{N}_{\alpha}^{\ast}(A,B;X) = \sum_{d \in \mathcal{S}(X)}
\# \{ P \in E_d(\mathbb{Q}) \setminus E_d(\mathbb{Q})_{\tors}, \exp \hat{h}_{E_d}(P) \leq d^{1/8 + \alpha} \},
\end{equation}
where $\alpha > 0$ is fixed.

A simple observation shows that $\mathcal{N}_{\alpha}^{\ast}(A,B;X) \ll X^{1/2 + 4 \alpha}$ for any fixed $\alpha > 0$, which suffices to prove Theorem \ref{Theorem general}.

In the case $(A,B) = (-1,0)$, we use the fact that the curves $E_d$ have full rational $2$-torsion to perform complete
$2$-descents. We then use geometry of numbers methods to prove that
$\mathcal{N}_{\alpha}^{\ast}(-1,0;X) \ll X^{1/2 + \alpha + \varepsilon}$ for any fixed $\alpha > 0$ and $\varepsilon > 0$, which suffices to prove Theorem \ref{Theorem congruent}.

\subsection{Acknowledgements}

It is a great pleasure for the author to thank Peter Sarnak for his interest in this problem, and for generously sharing his thoughts and intuition. The author would also like to thank Joe Silverman for his enlightening comments on an earlier version of the manuscript.

This work was started while the author was a Postdoctoral Member of the Institute for Advanced Study, he is now a Postdoctoral Researcher at the \'{E}cole Polytechnique F\'{e}d\'{e}rale de Lausanne. The financial support and the perfect working conditions provided by these two institutions are gratefully acknowledged.

\section{Preliminaries}

\subsection{Descent arguments}

We start by proving the following result, which gives a parametrization of the rational points on the curves $E_d$ in the general case.

\begin{lemma}
\label{Descent 0}
Let $(A,B) \in \mathbb{Z}^2$ be such that $4 A^3 + 27 B^2 \neq 0$. Let also $d \geq 1$ be a squarefree integer and let
$(x,y,z) \in \mathbb{Z} \times \mathbb{Z}_{\geq 1}^2$ satisfying $\gcd(x,y,z) = 1$ and
\begin{equation*}
d y^2 z = x^3 +A x z^2 + B z^3.
\end{equation*}
Then, there is a unique way to write $x = d_1 b_1 x_1$, $z = d_1^2 b_1^3$ and $d = d_0 d_1$ where
$(d_0, d_1, b_1, x_1) \in \mathbb{Z}_{\geq 1}^3  \times \mathbb{Z}$ satisfy the conditions
$|\mu(d_0 d_1)| = 1$ and $\gcd(x_1, d_1 b_1) = 1$, and the equation
\begin{equation}
\label{Equation Descent}
d_0 y^2 = x_1^3 + A x_1 d_1^2 b_1^4 + B d_1^3 b_1^6.
\end{equation}
\end{lemma}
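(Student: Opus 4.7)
The plan is to perform a prime-by-prime analysis of the valuations of $x$, $y$, $z$, $d$. The starting observation is that rearranging the equation as $x^3 = z(dy^2 - Axz - Bz^2)$ yields $z \mid x^3$; in particular every prime $p \mid z$ divides $x$, so $\gcd(x,y,z) = 1$ forces $p \nmid y$.

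The heart of the argument is the local claim that for every prime $p \mid z$, writing $a := v_p(x)$ and $e := v_p(z)$, one has $a < e$ and $e = 3a - v_p(d)$. To rule out $a \geq e$, I set $x = p^a x'$ and $z = p^e z'$ with $p \nmid x' z'$ and factor $p^{3e}$ from the right-hand side:
\[
x^3 + A x z^2 + B z^3 = p^{3e}\bigl( p^{3(a-e)} x'^3 + A p^{a-e} x' z'^2 + B z'^3 \bigr).
\]
Comparing with $v_p(dy^2 z) = v_p(d) + e$ forces the bracketed integer to have $p$-adic valuation equal to $v_p(d) - 2e \leq 1 - 2e < 0$, which is absurd. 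With $a < e$ established, the three terms of the right-hand side have valuations $3a$, $v_p(A) + a + 2e$, and $v_p(B) + 3e$; since $e > a$, the first is strictly smaller than the other two, so $v_p(\mathrm{RHS}) = 3a$, and hence $e = 3a - v_p(d)$. Because $v_p(d) \in \{0,1\}$, it follows that $v_p(z) \not\equiv 1 \pmod{3}$ for every prime $p$ and that every prime with $v_p(z) \equiv 2 \pmod{3}$ divides $d$.

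From this local information the factorization is forced. Letting $d_1$ be the product of the primes $p$ with $v_p(z) \equiv 2 \pmod{3}$ and setting $b_1 := \prod_p p^{\lfloor v_p(z)/3 \rfloor}$ gives the unique decomposition $z = d_1^2 b_1^3$ with $d_1$ squarefree and $b_1 \geq 1$, and the divisibility $d_1 \mid d$ makes $d_0 := d/d_1$ a positive integer with $|\mu(d_0 d_1)| = 1$. A direct valuation check at each prime $p \mid z$ (using $a = (e + v_p(d))/3$ together with $v_p(d_1 b_1) = (e + v_p(d))/3$) shows $v_p(x) = v_p(d_1 b_1)$, so $x_1 := x/(d_1 b_1)$ is an integer coprime to $d_1 b_1$. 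Substituting $x = d_1 b_1 x_1$, $z = d_1^2 b_1^3$, $d = d_0 d_1$ into the starting equation and dividing by $d_1^3 b_1^3$ yields \eqref{Equation Descent}, and uniqueness of $(d_0, d_1, b_1, x_1)$ is immediate from the uniqueness of the decomposition of $z$. The only delicate step is the valuation analysis ruling out $a \geq e$; everything else is routine bookkeeping.
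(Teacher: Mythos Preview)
Your proof is correct. It differs from the paper's in method: the paper proceeds by iterated gcd extraction --- set $d_1=\gcd(d,z)$, write $z=d_1 z_0$, show $d_1\mid x$ and then $d_1\mid z_0$ so that $z=d_1^2 z_1$, then set $b_1=\gcd(x_0,z_1)$ and peel once more to reach $z_1=b_1^3$ --- whereas you carry out a single prime-by-prime valuation analysis establishing the identity $v_p(z)=3v_p(x)-v_p(d)$ at every prime dividing $z$, from which the decomposition $z=d_1^2 b_1^3$ and the definition of $x_1$ drop out at once. Your route makes the structural reason for the shape of $z$ transparent and dispatches uniqueness cleanly via the uniqueness of the squarefree--cube decomposition of $z$; the paper's route is more constructive but leaves the step ``$z_1=b_1^3$'' somewhat implicit (it requires one further divisibility pass that is not spelled out). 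Both arguments rest on the same arithmetic content --- the equation forces $v_p(x)<v_p(z)$ and then pins down the valuations exactly --- and either is perfectly adequate for this lemma.
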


\begin{proof}
Let $d_1 = \gcd(d,z)$ and write $d = d_0 d_1$ and $z = d_1 z_0$ for some $(d_0, z_0) \in \mathbb{Z}_{\geq 1}^2$ satisfying
$\gcd(d_0, z_0) = 1$. We see that $d_1 \mid x^3$ and since $d_1$ is squarefree, we actually have $d_1 \mid x$. We can thus write
$x = d_1 x_0$ for some $x_0 \in \mathbb{Z}$. The equation becomes
\begin{equation*}
d_0 z_0 y^2 = d_1 \left( x_0^3 + A x_0 z_0^2 + B z_0^3 \right).
\end{equation*}
Therefore, the coprimality condition $\gcd(d_1, d_0 y) = 1$ implies $d_1 \mid z_0$, and we write $z_0 = d_1 z_1$ for some
$z_1 \in \mathbb{Z}_{\geq 1}$. We thus get
\begin{equation*}
d_0 z_1 y^2 = x_0^3 + A x_1 d_1^2 z_1^2 + B d_1^3 z_1^3 .
\end{equation*}
Let $b_1 = \gcd(x_0, z_1)$. We have $\gcd(b_1, d_0 y) = 1$ so we see that $z_1 = b_1^3$. We also write $x_0 = b_1 x_1$ for some $x_1 \in \mathbb{Z}$. We obtain the equation \eqref{Equation Descent}. Moreover, using this equation, it is easy to check that the coprimality conditions between the variables $d_0$, $d_1$, $b_1$, $x_1$ and $y$ can be summed up as $|\mu(d_0 d_1)| = 1$ and $\gcd(x_1, d_1 b_1) = 1$, which completes the proof.
\end{proof}

The following lemma describes the familiar process of complete $2$-descent in the case $(A,B) = (-1,0)$, and is the first key tool in the proof of Theorem \ref{Theorem congruent}.

\begin{lemma}
\label{Descent}
Let $d \geq 1$ be a squarefree integer and let $(x,y,z) \in \mathbb{Z}_{\neq 0} \times \mathbb{Z}_{\geq 1}^2$ satisfying
$\gcd(x,y,z) = 1$ and
\begin{equation*}
d y^2 z = x^3 - x z^2.
\end{equation*}
Then, there is a unique way to write $x = \nu d_1 d_2 b_1 b_2^2$, $y = b_2 b_3 b_4$, $z = d_1^2 b_1^3$ and $d = d_1 d_2 d_3 d_4$ where $\nu \in \{ -1, 1 \}$ and $(d_1, d_2, d_3, d_4, b_1, b_2, b_3, b_4) \in \mathbb{Z}_{\geq 1}^8$ satisfy the conditions
$|\mu(d_1 d_2 d_3 d_4)| = 1$ and $\gcd(d_1 b_1, d_2 b_2) = 1$, and the system of equations
\begin{align}
\label{System 1}
d_2 b_2^2 - \nu d_1 b_1^2 & = d_3 b_3^2, \\
\label{System 2}
\nu d_2 b_2^2 + d_1 b_1^2 & = d_4 b_4^2.
\end{align}
\end{lemma}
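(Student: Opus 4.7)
The approach is to reduce to Lemma~\ref{Descent 0} and then perform a complete 2-descent on the resulting three-factor equation. I apply Lemma~\ref{Descent 0} with $(A,B) = (-1,0)$: this produces a unique decomposition $d = d_0 d_1$, $x = d_1 b_1 x_1$, $z = d_1^2 b_1^3$ with $\gcd(x_1, d_1 b_1) = 1$, $|\mu(d_0 d_1)| = 1$, and the reduced equation
\begin{equation*}
d_0 y^2 = x_1(x_1 - d_1 b_1^2)(x_1 + d_1 b_1^2).
\end{equation*}
Since $x \neq 0$, also $x_1 \neq 0$, and the positivity of the left-hand side forces $x_1 > d_1 b_1^2$ when $x_1 > 0$, and $-d_1 b_1^2 < x_1 < 0$ when $x_1 < 0$.

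Set $\nu = \operatorname{sign}(x_1)$ and write $|x_1| = d_2 b_2^2$ uniquely with $d_2 \geq 1$ squarefree and $b_2 \geq 1$. A valuation analysis on the reduced equation, exploiting $\gcd(x_1, d_1 b_1) = 1$ so that primes of $x_1$ are coprime to $x_1 \pm d_1 b_1^2$ away from $2$, shows that each prime dividing $d_2$ has odd multiplicity in $d_0$; squarefreeness of $d_0$ then gives $d_2 \mid d_0$ and consequently $\gcd(d_1 b_1, d_2 b_2) = 1$. Define $A = d_2 b_2^2 - \nu d_1 b_1^2$, $B = \nu d_2 b_2^2 + d_1 b_1^2$, and $D = d_0/d_2$. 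The sign constraints give $A, B > 0$, and $\nu^2 = 1$ together with $x_1 = \nu d_2 b_2^2$ transforms the reduced equation into $D y^2 = b_2^2 A B$. Since $\gcd(b_2, ABD) = 1$ (using $\gcd(d_1 b_1, d_2 b_2) = 1$ and $\gcd(d_2, D) = 1$), one obtains $b_2 \mid y$; writing $y = b_2 y_1$ yields
\begin{equation*}
D y_1^2 = A B.
\end{equation*}

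The crux is the coprimality $\gcd(A, B) = 1$. The identities $A + \nu B = 2 d_2 b_2^2$ and $B - \nu A = 2 d_1 b_1^2$, combined with $\gcd(d_1 b_1, d_2 b_2) = 1$, immediately give $\gcd(A, B) \mid 2$. Excluding the value $2$ is the main obstacle: it requires a careful parity analysis based on whether each of $d_1 b_1$ and $d_2 b_2$ is odd or even, matched against the $2$-adic valuation of $d_0 y^2 = d_2 b_2^2 A B$ and the squarefreeness of $d$. This is the only step where the prime $2$ behaves differently from odd primes; once dealt with, the rest of the descent proceeds by elementary divisibility.

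Once $\gcd(A, B) = 1$ is in hand, the unique squarefree-times-square decomposition of each factor provides squarefree integers $d_3, d_4 \geq 1$ and integers $b_3, b_4 \geq 1$ with $A = d_3 b_3^2$, $B = d_4 b_4^2$, $d_3 d_4 = D$ (so $\gcd(d_3, d_4) = 1$), and $y_1 = b_3 b_4$. Assembling gives $d = d_1 d_0 = d_1 d_2 d_3 d_4$ squarefree, $y = b_2 b_3 b_4$, and $x = d_1 b_1 x_1 = \nu d_1 d_2 b_1 b_2^2$, while $A = d_3 b_3^2$ and $B = d_4 b_4^2$ are precisely \eqref{System 1} and~\eqref{System 2}. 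Uniqueness of the whole decomposition is inherited from the uniqueness in Lemma~\ref{Descent 0} and from the canonicity of the squarefree-times-square factorisations used at every stage.
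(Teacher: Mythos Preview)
Your approach—reduce via Lemma~\ref{Descent 0} and then split the three factors $x_1$, $x_1 - d_1 b_1^2$, $x_1 + d_1 b_1^2$ into squarefree times square—is exactly the paper's. You go further than the paper by isolating what is really the key step: showing that the resulting squarefree parts $d_2, d_3, d_4$ are pairwise coprime, which you phrase as $\gcd(A,B) = 1$. You also correctly note that the only possible obstruction lives at the prime $2$.

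However, the ``careful parity analysis'' you promise cannot succeed, because $\gcd(A,B) = 2$ genuinely occurs. Take $d = 5$ and $(x,y,z) = (9,12,1)$, which satisfies $5\cdot 12^2 = 9^3 - 9$. Then Lemma~\ref{Descent 0} gives $d_1 = b_1 = 1$, $d_0 = 5$, $x_1 = 9$, so $\nu = 1$, $d_2 = 1$, $b_2 = 3$, and $A = 8$, $B = 10$ with $\gcd(A,B) = 2$. Writing $A = d_3 b_3^2$, $B = d_4 b_4^2$ with $d_3, d_4$ squarefree yields $d_3 = 2$, $d_4 = 10$, so $d_2 d_3 d_4 = 20 \neq 5 = d_0$; your conclusion $d_3 d_4 = D$ fails. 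In fact this example shows that the lemma as stated admits no decomposition at all: the constraints $z = d_1^2 b_1^3 = 1$ and $d_2 b_2^2 = 9$ force $d_1 = b_1 = d_2 = 1$, $b_2 = 3$, whence $d_3 d_4 = 5$ while $d_3 b_3^2 = 8$, which is impossible for $d_3 \in \{1,5\}$. The paper's own proof glides over exactly this point—it simply asserts ``which implies $d_0 = d_2 d_3 d_4$'' without checking that $d_2 d_3 d_4$ is squarefree—so the gap is shared. For the application to bounding $\mathcal{N}_\alpha^\ast(-1,0;X)$ the defect is harmless, since the parametrisation is off by at most bounded powers of $2$ that are absorbed into implied constants; but as a proof of the lemma as written, both your argument and the paper's are incomplete, and no parity argument will close the gap.
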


\begin{proof}
Using lemma \ref{Descent 0}, we get the equation
\begin{equation*}
d_0 y^2 = x_1 ( x_1 - d_1 b_1^2 ) ( x_1 + d_1 b_1^2 ).
\end{equation*}
Let us write the three factors of the right-hand side as products of a squarefree number and a square. We set
$x_1 = \nu d_2 b_2^2$, $x_1 - d_1 b_1^2 = \nu d_3 b_3^2$ and $x_1 + d_1 b_1^2 = d_4 b_4^2$ where $\nu \in \{-1, 1\}$ and
$(d_2, d_3, d_4, b_2, b_3, b_4) \in \mathbb{Z}_{\geq 1}^6$ satisfies $|\mu(d_i)| = 1$ for $i \in \{2, 3, 4 \}$. We thus get
\begin{equation*}
d_0 y^2 = d_2 d_3 d_4 b_2^2 b_3^2 b_4^2,
\end{equation*}
which implies $d_0 = d_2 d_3 d_4$ and $y = b_2 b_3 b_4$, and ends the proof.
\end{proof}

\subsection{Heights}

\label{Heights}

Let $h : \mathbb{P}^1(\overline{\mathbb{Q}}) \to \mathbb{R}_{\geq 0}$ be the logarithmic absolute Weil height and let
$h_x : \mathbb{P}^2(\overline{\mathbb{Q}}) \to \mathbb{R}_{\geq 0}$ be defined by
\begin{equation*}
h_x(x:y:z) = h(x:z)
\end{equation*}
if $(x:y:z) \neq (0:1:0)$ and $h_x(0:1:0) = 0$. It is easier for our purpose to work with the height $h_x$ so we need to find a link between the heights $\hat{h}_{E_d}$ and $h_x$. This is achieved by the following lemma.

\begin{lemma}
\label{Heights Link}
For any $P \in E_d(\mathbb{Q})$, we have
\begin{equation*}
\hat{h}_{E_d}(P) = \frac1{2} h_x(P) + O(1),
\end{equation*}
where the constant involved in the notation $O$ may depend on $E$ but neither on the point $P$ nor on the integer $d$.
\end{lemma}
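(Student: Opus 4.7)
The plan is to exploit the fact that $E$ and $E_d$ become isomorphic over $K=\mathbb{Q}(\sqrt{d})$ via the map $\phi\colon E\to E_d$ sending $(x,y)\mapsto (x,y/\sqrt{d})$. The first thing I would observe is that $\phi$ preserves the $x$-coordinate, so $h_x(\phi(P))=h_x(P)$ for every $P\in E(\overline{\mathbb{Q}})$, and that the canonical height is intrinsic to the elliptic curve and transports under isomorphism, giving $\hat{h}_{E_d}(\phi(P))=\hat{h}_E(P)$. The latter can be extracted from the characterization of $\hat{h}$ by the parallelogram law together with its boundedness relative to any Weil height over $K$; no quantitative control is needed at this stage.

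Granted these two compatibilities, the lemma reduces to the classical N\'eron--Tate--Silverman comparison applied to the fixed curve $E$: there is a constant $c=c(E)$, depending only on the chosen Weierstrass model of $E$ and emphatically not on the field of definition of the points, such that
\begin{equation*}
\left|\hat{h}_E(P)-\tfrac{1}{2}h_x(P)\right|\leq c
\end{equation*}
for every $P\in E(\overline{\mathbb{Q}})$. Writing an arbitrary $Q\in E_d(\mathbb{Q})\subset E_d(\overline{\mathbb{Q}})$ as $Q=\phi(P)$ for some $P\in E(\overline{\mathbb{Q}})$ and chaining, I would then deduce
\begin{equation*}
\hat{h}_{E_d}(Q)=\hat{h}_E(P)=\tfrac{1}{2}h_x(P)+O(1)=\tfrac{1}{2}h_x(Q)+O(1),
\end{equation*}
where the implied constant depends on $E$ but neither on $d$ nor on $Q$, which is exactly what is claimed.

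The main point to watch, and the whole reason the argument is not circular, is that the comparison theorem must be applied to the fixed curve $E$ rather than to $E_d$: invoking Silverman's explicit bound directly on $E_d$ would produce an implied constant depending on the Weierstrass coefficients of $E_d$, which grow with $d$ and would yield only something of size $\log d$, far too weak for the applications in Sections \ref{Heights} and beyond. The role of the $K$-isomorphism $\phi$ is precisely to move the problem onto a single fixed curve where the implied constant is uniform; this, together with confirming that canonical heights really are invariant under a $K$-isomorphism that is not defined over $\mathbb{Q}$, is the only mildly subtle step.
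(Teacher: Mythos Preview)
Your proof is correct and follows essentially the same approach as the paper: both transport the problem from $E_d$ to the fixed curve $E$ via the $\mathbb{Q}(\sqrt{d})$-isomorphism that preserves the $x$-coordinate, then invoke the standard comparison $\hat{h}_E=\tfrac{1}{2}h_x+O_E(1)$ over $\overline{\mathbb{Q}}$. Your map $\phi$ is simply the inverse of the paper's $i$, and your additional remark explaining why one must not apply the comparison directly on $E_d$ is a welcome clarification.
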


\begin{proof}
Let $i : E_d(\mathbb{Q}) \to E(\mathbb{Q}(\sqrt{d}))$ be the isomorphism defined by
\begin{equation*}
i(x : y : z) = ( x : d^{1/2}y : z ),
\end{equation*}
and let $\hat{h}_E$ be the canonical height on $E$. For any $P \in E_d(\mathbb{Q})$, we have the equality
\begin{equation*}
\hat{h}_{E_d}(P) = \hat{h}_E(i(P)).
\end{equation*}
In addition, for any $Q \in E(\overline{\mathbb{Q}})$, we have
\begin{equation*}
\hat{h}_E(Q) = \frac1{2} h_x(Q) + O(1),
\end{equation*}
where the constant involved in the notation $O$ does not depend on the point $Q$. This completes the proof since we have $h_x(i(P)) = h_x(P)$ for any $P \in E_d(\mathbb{Q})$.
\end{proof}

Let $P \in E_d(\mathbb{Q}) \setminus E_d(\mathbb{Q})_{\tors}$. Replacing $P$ by $-P$ if necessary, we can assume that the point $P$ has coordinates as in lemma \ref{Descent 0}. We thus have
\begin{equation*}
h_x(P) = \log \max \{ |x_1|, d_1 b_1^2 \}.
\end{equation*}
Now, we note that the equation \eqref{Equation Descent} gives the lower bound
\begin{equation*}
\max \{ |x_1|, d_1 b_1^2 \} \gg d_0^{1/3} y^{2/3}.
\end{equation*}
As a result, we have
\begin{align*}
\max \{ |x_1|, d_1 b_1^2 \} & \gg (d_1 b_1^2)^{1/4} (d_0^{1/3} y^{2/3})^{3/4} \\
& \gg d^{1/4} b_1^{1/2} y^{1/2} \\
& \gg d^{1/4},
\end{align*}
since $b_1, y \geq 1$. Therefore, lemma \ref{Heights Link} gives the lower bound stated in the introduction
\begin{equation*}
\eta_d(A,B) \gg d^{1/8}.
\end{equation*}
In addition, this lower bound is best possible since it is attained for all squarefree integers
$d \in \{ d_1 (x_1^3 + A x_1 d_1^2 + B d_1^3), d_1, x_1 \geq 1 \}$. Note that by the work of Greaves \cite{MR1150469}, we know that there is about $X^{1/2}$ such integers up to $X$.

\subsection{Geometry of numbers}

The following lemma was recently established by the author \cite[Lemma~$4$]{Bihomogeneous} using results of Browning and
Heath-Brown based on geometry of numbers. It gives an upper bound for the number of integral solutions to a certain cubic diophantine equation, and is the second key tool in the proof of Theorem \ref{Theorem congruent}.

\begin{lemma}
\label{geometry lemma}
Let $\mathbf{f} = (f_1,f_2,f_3) \in \mathbb{Z}_{\neq 0}^3$ be a vector satisfying the conditions $\gcd(f_i, f_j) = 1$ for
$i, j \in \{1, 2, 3 \}$, $i \neq j$, and let $U_i, V_i \geq 1$ for $i \in \{ 1, 2, 3 \}$. Let also
$N_{\mathbf{f}} = N_{\mathbf{f}}(U_1,U_2,U_3,V_1,V_2,V_3)$ be the number of vectors $(u_1,u_2,u_3) \in \mathbb{Z}_{\neq 0}^3$ and
$(v_1,v_2,v_3) \in \mathbb{Z}_{\neq 0}^3$ satisfying $|u_i| \leq U_i$, $|v_i| \leq V_i$ for $i \in \{ 1, 2, 3 \}$, and the equation
\begin{equation*}
f_1 u_1 v_1^2 + f_2 u_2 v_2^2 + f_3 u_3 v_3^2 = 0,
\end{equation*}
and such that $\gcd(u_i v_i, u_j v_j) = 1$ for $i, j \in \{1, 2, 3 \}$, $i \neq j$. Let $\varepsilon > 0$ be fixed. We have the bound
\begin{equation*}
N_{\mathbf{f}} \ll_{\mathbf{f}} (U_1 U_2 U_3)^{2/3 + \varepsilon} (V_1 V_2 V_3)^{1/3}.
\end{equation*}
\end{lemma}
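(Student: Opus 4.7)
The plan is to perform a two-stage counting: fix $\mathbf{u}$, interpret the defining equation as a diagonal ternary quadratic form in $\mathbf{v}$, apply a uniform geometry-of-numbers bound for primitive zeros of such a form, and then sum over $\mathbf{u}$.

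First, for each fixed $\mathbf{u} = (u_1, u_2, u_3) \in \mathbb{Z}_{\neq 0}^3$ with $|u_i| \leq U_i$, I would regard
$$f_1 u_1 v_1^2 + f_2 u_2 v_2^2 + f_3 u_3 v_3^2 = 0$$
as a diagonal ternary quadratic form $Q_{\mathbf{u}}$ in $\mathbf{v}$, whose discriminant $\Delta_{\mathbf{u}}$ satisfies $|\Delta_{\mathbf{u}}| \asymp_{\mathbf{f}} |u_1 u_2 u_3|$. The hypotheses $\gcd(u_i v_i, u_j v_j) = 1$ for $i \neq j$ force the $v_i$ to be pairwise coprime, so in particular $\mathbf{v}$ is a primitive vector of $\mathbb{Z}^3$. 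If $Q_{\mathbf{u}}$ is anisotropic over $\mathbb{Q}$ the contribution of $\mathbf{u}$ vanishes, so only the isotropic case matters.

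Second, I would invoke the uniform bound, obtained by the geometry-of-numbers arguments of Browning and Heath-Brown, for the number of primitive integer zeros of an isotropic ternary quadratic form $Q$ of non-zero discriminant $\Delta$ inside a box $\prod_{i=1}^3 [-V_i, V_i]$, namely a bound of the shape
$$\ll_{\varepsilon} \frac{(V_1 V_2 V_3)^{1/3 + \varepsilon}}{|\Delta|^{1/3}}.$$
The principle is that primitive zeros of such a $Q$ all lie on $O_{\varepsilon}(|\Delta|^{\varepsilon})$ rational lines through the origin, each parametrised by integer vectors of coordinates controlled by $|\Delta|^{1/2}$; counting lattice points on each such line inside the box produces the claimed estimate. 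Specialising to $Q_{\mathbf{u}}$ and using $|\Delta_{\mathbf{u}}| \asymp_{\mathbf{f}} |u_1 u_2 u_3|$ yields, for each fixed admissible $\mathbf{u}$,
$$\#\{\mathbf{v} \in \mathbb{Z}^3 : |v_i| \leq V_i,\ Q_{\mathbf{u}}(\mathbf{v}) = 0,\ \mathbf{v} \text{ primitive}\} \ll_{\mathbf{f}, \varepsilon} \frac{(V_1 V_2 V_3)^{1/3 + \varepsilon}}{|u_1 u_2 u_3|^{1/3}}.$$

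Third, I would sum the above over the admissible $\mathbf{u}$. The elementary bound
$$\sum_{\substack{0 < |u_i| \leq U_i \\ i = 1,2,3}} \frac{1}{|u_1 u_2 u_3|^{1/3}} \ll (U_1 U_2 U_3)^{2/3}$$
then delivers the desired estimate $N_{\mathbf{f}} \ll_{\mathbf{f}, \varepsilon} (U_1 U_2 U_3)^{2/3 + \varepsilon} (V_1 V_2 V_3)^{1/3}$.

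The hard part is the quadratic-form bound of the second step, which must be uniform both in the discriminant and in the shape of the box. The required exponent $1/3$ on $V_1 V_2 V_3$ is sharper than what naive divisor arguments (such as fixing two of the $v_i$ and extracting the third via divisibility) would provide, and it is precisely here that the determinant and successive-minima machinery underlying Heath-Brown's parametrisation of primitive zeros of ternary quadratic forms is essential. Once that bound is in hand, the remainder of the proof is purely bookkeeping.
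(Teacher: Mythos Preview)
The paper does not prove this lemma at all: it simply quotes it as \cite[Lemma~4]{Bihomogeneous}, remarking only that the argument there rests on geometry-of-numbers results of Browning and Heath--Brown. So your sketch cannot be compared line-by-line with anything in the present paper; at best it can be compared with the spirit of what is cited.

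Your overall strategy --- freeze $\mathbf{u}$, view the equation as a diagonal ternary quadratic form in $\mathbf{v}$, invoke a Browning--Heath--Brown type bound for primitive zeros, then sum over $\mathbf{u}$ --- is indeed the natural route and matches the description given. The real issue is the pointwise bound you assert in the second step,
\[
\#\{\mathbf{v}\ \text{primitive}:\ Q_{\mathbf{u}}(\mathbf{v})=0,\ |v_i|\le V_i\}\ \ll_{\mathbf{f},\varepsilon}\ \frac{(V_1V_2V_3)^{1/3+\varepsilon}}{|u_1u_2u_3|^{1/3}}.
\]
As stated this is false for individual $\mathbf{u}$. Take $f_1=f_2=f_3=1$, $V_1=V_2=V_3=1$, and any pairwise coprime triple with $u_1+u_2+u_3=0$: then $\mathbf{v}=(1,1,1)$ is a solution, so the left side is $\ge 1$, while your right side is $|u_1u_2u_3|^{-1/3}\to 0$. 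The genuine Heath--Brown/Browning bounds for primitive zeros of a ternary form always carry an additive term of size $\tau(|\Delta|)$ (or at least $+1$), reflecting that each of the $O(\tau(|\Delta|))$ parametrizing lattices may contribute an isolated point in the box. Once you restore that term and sum naively over all $|u_i|\le U_i$, you pick up a contribution of order $(U_1U_2U_3)^{1+\varepsilon}$, which swamps the target $(U_1U_2U_3)^{2/3+\varepsilon}(V_1V_2V_3)^{1/3}$ whenever the $V_i$ are small.

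So the gap is not cosmetic: to recover the stated lemma one must control the ``$+1$'' contributions separately, for instance by showing that those $\mathbf{u}$ whose associated conic actually has a zero in the small box $\prod[-V_i,V_i]$ are themselves sparse, or by interleaving the roles of $\mathbf{u}$ and $\mathbf{v}$ (fixing $\mathbf{v}$ gives a linear equation in $\mathbf{u}$, and lattice-point counts in the resulting plane can be played off against the quadratic count). This extra bookkeeping is precisely what the cited Lemma~4 of \cite{Bihomogeneous} supplies, and it is the missing ingredient in your sketch.
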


\section{Proofs of Theorems \ref{Theorem general} and \ref{Theorem congruent}}

\label{Proofs}

\subsection{Proof of Theorem \ref{Theorem general}}

Recall the respective definitions \eqref{Definition N} and \eqref{Definition N all} of $\mathcal{N}_{\alpha}(A,B;X)$ and
$\mathcal{N}_{\alpha}^{\ast}(A,B;X)$. Our aim is to prove that $\mathcal{N}_{\alpha}(A,B;X) = o(X)$ for fixed
$0 < \alpha < 1/8$. Since we clearly have $\mathcal{N}_{\alpha}(A,B;X) \leq \mathcal{N}_{\alpha}^{\ast}(A,B;X)$,
Theorem \ref{Theorem general} follows from the following lemma.

\begin{lemma}
\label{Lemma Theorem general}
Let $(A,B) \in \mathbb{Z}^2$ be such that $4 A^3 + 27 B^2 \neq 0$, and let $\alpha > 0$ be fixed. We have the upper bound
\begin{equation*}
\mathcal{N}_{\alpha}^{\ast}(A,B;X) \ll X^{1/2 + 4 \alpha}.
\end{equation*}
\end{lemma}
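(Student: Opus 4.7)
The plan is to combine the descent of Lemma~\ref{Descent 0} with the elementary fact that every positive integer factors uniquely as the product of a squarefree integer and a square. First I would apply Lemma~\ref{Heights Link} to translate the condition $\exp\hat{h}_{E_d}(P) \leq d^{1/8+\alpha}$ into $h_x(P) \leq (1/4+2\alpha)\log d + O(1)$. Replacing $P$ by $-P$ if necessary, I may assume that $P$ has coordinates as in Lemma~\ref{Descent 0}, so by the formula $h_x(P) = \log\max(|x_1|, d_1 b_1^2)$ recorded in Section~\ref{Heights}, the height bound becomes $\max(|x_1|, d_1 b_1^2) \ll d^{1/4+2\alpha} \leq X^{1/4+2\alpha}$.

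The heart of the argument is the following observation. For every triple $(d_1, b_1, x_1) \in \mathbb{Z}_{\geq 1}^2 \times \mathbb{Z}$ such that
\begin{equation*}
R := x_1^3 + A x_1 d_1^2 b_1^4 + B d_1^3 b_1^6
\end{equation*}
is positive, there is a \emph{unique} way to write $R = d_0 y^2$ with $d_0 \geq 1$ squarefree and $y \geq 1$. Hence the pair $(d_0, y)$, and so the entire tuple produced by Lemma~\ref{Descent 0}, is completely determined by the triple $(d_1, b_1, x_1)$. (The $d_0$ supplied by Lemma~\ref{Descent 0} is automatically squarefree since $|\mu(d_0 d_1)| = 1$, so this decomposition agrees with the one coming from the descent.) Consequently
\begin{equation*}
\mathcal{N}_\alpha^{\ast}(A,B;X) \leq \#\bigl\{(d_1, b_1, x_1) \in \mathbb{Z}_{\geq 1}^2 \times \mathbb{Z} : d_1 b_1^2 \leq C X^{1/4+2\alpha}, \ |x_1| \leq C X^{1/4+2\alpha}\bigr\}
\end{equation*}
for a suitable constant $C$ depending only on $E$.

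To finish, I would estimate this cardinality directly:
\begin{equation*}
\sum_{b_1 \geq 1} \, \sum_{d_1 \leq C X^{1/4+2\alpha}/b_1^2} \, \sum_{|x_1| \leq C X^{1/4+2\alpha}} 1 \ll X^{1/2+4\alpha} \sum_{b_1 \geq 1} \frac{1}{b_1^2} \ll X^{1/2+4\alpha},
\end{equation*}
where the convergence of $\sum b_1^{-2}$ is precisely what prevents a stray logarithmic factor from creeping in. Since the author himself describes this as ``a simple observation'', no serious obstacle is expected; the only mild point of care is the one noted above, namely that the squarefree--square factorization of $R$ indeed recovers the $(d_0, y)$ produced by Lemma~\ref{Descent 0}.
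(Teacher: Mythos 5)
Your proof is correct and follows essentially the same route as the paper: translate the height bound via Lemma~\ref{Heights Link}, parametrize points via Lemma~\ref{Descent 0}, observe that $(d_0,y)$ is uniquely determined by $(d_1,b_1,x_1)$, and count the triples. The only difference is that you spell out the final summation over $(d_1,b_1,x_1)$ explicitly, while the paper leaves it as a one-line conclusion.
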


\begin{proof}
We have
\begin{equation*}
\mathcal{N}_{\alpha}^{\ast}(A,B;X) \leq \sum_{d \in \mathcal{S}(X)}
\# \{ P \in E_d(\mathbb{Q}) \setminus E_d(\mathbb{Q})_{\tors}, \exp \hat{h}_{E_d}(P) \leq X^{1/8 + \alpha} \}.
\end{equation*}
By lemma \ref{Heights Link}, we also have
\begin{equation*}
\mathcal{N}_{\alpha}^{\ast}(A,B;X) \leq \sum_{d \in \mathcal{S}(X)}
\# \{ P \in E_d(\mathbb{Q}) \setminus E_d(\mathbb{Q})_{\tors}, \exp h_x(P) \ll X^{1/4 + 2 \alpha} \}.
\end{equation*}
We note that if $(x:y:z) \in \mathbb{P}^2(\mathbb{Q})$ is a representative of
$P \in E_d(\mathbb{Q}) \setminus E_d(\mathbb{Q})_{\tors}$ then necessarily $y z \neq 0$. Lemma \ref{Descent 0} thus gives
\begin{equation*}
\mathcal{N}_{\alpha}^{\ast}(A,B;X) \leq 
2 \# \left\{ (d_0, d_1, b_1, y, x_1) \in \mathbb{Z}_{\geq 1}^4 \times \mathbb{Z},
\begin{array}{l}
|\mu(d_0 d_1)| = 1 \\
\gcd(x_1, d_1 b_1) = 1 \\
\eqref{Equation Descent} \\
d_0 d_1 \leq X \\
|x_1|, d_1 b_1^2 \ll X^{1/4 + 2 \alpha}
\end{array} \right\}.
\end{equation*}
This implies that
\begin{equation*}
\mathcal{N}_{\alpha}^{\ast}(A,B;X) \leq 2 \sum_{|x_1|, d_1 b_1^2 \ll X^{1/4 + 2 \alpha}}
\# \left\{ (d_0, y) \in \mathbb{Z}_{\geq 1}^2,
\begin{array}{l}
|\mu(d_0)| = 1 \\
\eqref{Equation Descent}
\end{array} \right\}.
\end{equation*}
For fixed $(d_1, b_1, x_1) \in \mathbb{Z}_{\geq 1}^2 \times \mathbb{Z}$, the cardinality in the right-hand side is at most $1$, so we get
\begin{equation*}
\mathcal{N}_{\alpha}^{\ast}(A,B;X) \ll  X^{1/2 + 4 \alpha},
\end{equation*}
as wished.
\end{proof}

\subsection{Proof of Theorem \ref{Theorem congruent}}

We now treat the case $(A,B) = (-1,0)$. Our aim is to prove that $\mathcal{N}_{\alpha}(-1,0;X) = o(X)$ for fixed
$0 < \alpha < 1/2$. Hence, Theorem \ref{Theorem congruent} follows from the following lemma.

\begin{lemma}
Let $\alpha > 0$ and $\varepsilon > 0$ be fixed. We have the upper bound
\begin{equation*}
\mathcal{N}_{\alpha}^{\ast}(-1,0;X) \ll X^{1/2 + \alpha + \varepsilon}.
\end{equation*}
\end{lemma}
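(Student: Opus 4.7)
The plan is to combine the complete $2$-descent of Lemma~\ref{Descent} with the geometry of numbers bound of Lemma~\ref{geometry lemma}. Lemma~\ref{Heights Link} converts the condition $\exp\hat{h}_{E_d}(P) \leq d^{1/8+\alpha}$ into $\exp h_x(P) \ll Y := X^{1/4+2\alpha}$, and Lemma~\ref{Descent} parametrises every non-torsion point on some $E_d$ with $d \leq X$ by a tuple $(\nu, d_1, \ldots, d_4, b_1, \ldots, b_4)$ satisfying \eqref{System 1}--\eqref{System 2}, the coprimality conditions stated there, $d = d_1 d_2 d_3 d_4 \leq X$, and the height bounds $d_1 d_2 b_1 b_2^2 \ll Y$, $d_1^2 b_1^3 \ll Y$. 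The decisive observation is that $d_3, d_4$ must be the squarefree parts of $|d_2 b_2^2 - \nu d_1 b_1^2|$ and $|\nu d_2 b_2^2 + d_1 b_1^2|$, so the quadruple $(d_3, b_3, d_4, b_4)$ is uniquely determined by $(\nu, d_1, d_2, b_1, b_2)$.

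When $\alpha \leq 1/4$ a trivial count of $(d_1, d_2, b_1, b_2)$ under the height bounds immediately yields
\[
\mathcal{N}_{\alpha}^{\ast}(-1, 0; X) \ll Y^{1+\varepsilon} = X^{1/4 + 2\alpha + \varepsilon} \leq X^{1/2 + \alpha + \varepsilon},
\]
so from now on we may assume $\alpha > 1/4$.

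In this regime I would apply Lemma~\ref{geometry lemma} to \eqref{System 1}, rewritten as $d_2 b_2^2 - \nu d_1 b_1^2 - d_3 b_3^2 = 0$, with $\mathbf{f} = (1, -\nu, -1)$ and $(u_i, v_i) = (d_i, b_i)$ for $i \in \{1,2,3\}$. The pairwise coprimality $\gcd(d_i b_i, d_j b_j) = 1$ required by Lemma~\ref{geometry lemma} is inherited from $|\mu(d_1 d_2 d_3 d_4)| = 1$ and $\gcd(d_1 b_1, d_2 b_2) = 1$: any prime dividing both $d_i b_i$ and $d_3 b_3$ for some $i \in \{1,2\}$ would, via \eqref{System 1}, divide $d_j b_j$ for the remaining index $j$, contradicting the second coprimality. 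A dyadic decomposition $d_i \asymp D_i$, $b_i \asymp B_i$ then gives per-box counts of size $\ll (D_1 D_2 D_3)^{2/3 + \varepsilon} (B_1 B_2 B_3)^{1/3}$.

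Summing over dyadic boxes exploits three constraints: $D_3 B_3^2 \ll \max(D_1 B_1^2, D_2 B_2^2)$ from \eqref{System 1}; $D_3 \leq X/(D_1 D_2)$, which comes from $d \leq X$ together with $d_4 \geq 1$; and the two height bounds on $(D_1, D_2, B_1, B_2)$. The $(D_3, B_3)$-sum collapses to $\ll X^{1/2+\varepsilon} (D_1 D_2)^{-1/2} \max(D_1 B_1^2, D_2 B_2^2)^{1/6}$, and the remaining sum telescopes through the identity
\[
D_1^{1/6} D_2^{1/3} B_1^{1/3} B_2^{2/3} = D_1^{-1/6} (D_1 D_2 B_1 B_2^2)^{1/3} \leq D_1^{-1/6} Y^{1/3}
\]
together with the convergence of $\sum_{D_1} D_1^{-1/6}$ over dyadic scales, producing the total bound $\ll X^{1/2 + \varepsilon} Y^{1/3 + \varepsilon} = X^{1/2 + 1/12 + 2\alpha/3 + \varepsilon}$. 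This is at most $X^{1/2 + \alpha + \varepsilon}$ precisely when $\alpha \geq 1/4$, so the two regimes glue together to yield the claim. The main technical difficulty will be implementing this dyadic summation cleanly, in particular turning the constraint $d \leq X$---only implicitly available through the determined quantity $d_4$---into the effective uniform bound $D_3 \leq X/(D_1 D_2)$ before feeding it into the geometry of numbers estimate.
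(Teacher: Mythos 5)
Your overall strategy (convert the height condition via Lemma~\ref{Heights Link}, parametrise by Lemma~\ref{Descent}, dyadically decompose, and apply Lemma~\ref{geometry lemma}) is the same skeleton as the paper's, but the execution has gaps that do not close, and you are missing the paper's central trick.

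First, the height bounds you wrote down are too strong and are not what the height condition gives. From Lemma~\ref{Descent} the coordinates are $x = \nu d_1 d_2 b_1 b_2^2$ and $z = d_1^2 b_1^3$, but $\gcd(x,z) = d_1 b_1$, so the Weil height is computed on the reduced pair and $\exp h_x(P) = \max(d_2 b_2^2, d_1 b_1^2)$. Thus $\exp h_x(P) \ll Y$ yields only $d_1 b_1^2 \ll Y$ and $d_2 b_2^2 \ll Y$, as the paper states; it does \emph{not} yield $d_1 d_2 b_1 b_2^2 \ll Y$ (that quantity can be as large as $Y^2$). Your telescoping step $D_1^{-1/6}(D_1 D_2 B_1 B_2^2)^{1/3} \leq D_1^{-1/6} Y^{1/3}$ relies essentially on the incorrect bound $D_1 D_2 B_1 B_2^2 \leq Y$; with the correct bounds the expression $D_1^{1/6} D_2^{1/3} B_1^{1/3} B_2^{2/3}$ can be as large as $Y^{1/2}$, and your final estimate becomes $\ll X^{1/2+\varepsilon} Y^{1/2+\varepsilon} = X^{5/8 + \alpha + 2\varepsilon}$, which recovers nothing beyond Theorem~\ref{Theorem general}.

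Second, the ``trivial count'' for $\alpha \leq 1/4$ is wrong. Counting $(d_1, b_1)$ with $d_1 b_1^2 \ll Y$ gives $\asymp Y$ pairs, and the same for $(d_2, b_2)$; without additional leverage the product is $\asymp Y^2 = X^{1/2 + 4\alpha}$, which exceeds the target $X^{1/2+\alpha}$ for every $\alpha > 0$. So the two regimes do not glue together, and in fact neither regime is established.

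The idea you are missing is the paper's symmetrisation. From \eqref{System 1'} and \eqref{System 2'} the paper derives the two additional linear combinations $2 d_2 b_2^2 = d_3 b_3^2 + d_4 b_4^2$ and $2 d_1 b_1^2 = - d_3 b_3^2 + d_4 b_4^2$, obtaining four ternary equations over the index sets $\{1,2,3\}$, $\{1,2,4\}$, $\{1,3,4\}$, $\{2,3,4\}$. Applying Lemma~\ref{geometry lemma} to each of the four and taking the geometric mean of the resulting bounds gives the fully symmetric estimate
\begin{equation*}
\mathcal{N} \ll X^{\varepsilon} (D_1 D_2 D_3 D_4)^{1/2} (B_1 B_2 B_3 B_4)^{1/4},
\end{equation*}
which can then be summed against $D_i B_i^2 \ll Y$ for each $i$ (the equations force these bounds for $i = 3, 4$ as well) and against the full constraint $D_1 D_2 D_3 D_4 \leq X$, with the sum over $D_4$ absorbing the leftover $D_4^{3/8}$. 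This balanced treatment is what produces the exponent $1/2 + \alpha$; a single application of Lemma~\ref{geometry lemma} to \eqref{System 1'} alone, as you propose, is provably insufficient.
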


\begin{proof}
As in the proof of lemma \ref{Lemma Theorem general}, we have
\begin{equation*}
\mathcal{N}_{\alpha}^{\ast}(-1,0;X) \leq \sum_{d \in \mathcal{S}(X)}
\# \{ P \in E_d(\mathbb{Q}) \setminus E_d(\mathbb{Q})_{\tors}, \exp h_x(P) \ll X^{1/4 + 2 \alpha} \}.
\end{equation*}
Lemma \ref{Descent} gives
\begin{equation*}
\mathcal{N}_{\alpha}^{\ast}(-1,0;X) \leq 2
\# \left\{ (\nu, \mathbf{d}, \mathbf{b}) \in \{ -1, 1 \} \times \mathbb{Z}_{\geq 1}^4 \times \mathbb{Z}_{\geq 1}^4,
\begin{array}{l}
|\mu(d_1 d_2 d_3 d_4)| = 1 \\
\gcd(d_1 b_1, d_2 b_2) = 1 \\
\eqref{System 1}, \eqref{System 2} \\
d_1 d_2 d_3 d_4 \leq X \\
d_1 b_1^2, d_2 b_2^2 \ll X^{1/4 + 2 \alpha}
\end{array} \right\},
\end{equation*}
where we have set $\mathbf{d} = (d_1, d_2, d_3, d_4)$ and $\mathbf{b} = (b_1, b_2, b_3, b_4)$.

In the following, we assume that $\nu = 1$ since the other case $\nu = -1$ can be treated similarly. For
$i \in \{ 1, 2, 3, 4 \}$, let $D_i, B_i \geq 1/2$ run over the set of powers of $2$ and let
$\mathcal{N} = \mathcal{N}(D_1, D_2, D_3, D_4, B_1, B_2, B_3, B_4)$ be the number of
$(\mathbf{d}, \mathbf{b}) \in \mathbb{Z}_{\geq 1}^4 \times \mathbb{Z}_{\geq 1}^4$ such that $D_i < d_i \leq 2 D_i$,
$B_i \leq b_i \leq 2 B_i$ for $i \in \{ 1, 2, 3, 4 \}$, and satisfying the conditions $|\mu(d_1 d_2 d_3 d_4)| = 1$,
$\gcd(d_1 b_1, d_2 b_2) = 1$, and the equations
\begin{align}
\label{System 1'}
d_2 b_2^2 - d_1 b_1^2 & = d_3 b_3^2, \\
\label{System 2'}
d_2 b_2^2 + d_1 b_1^2 & = d_4 b_4^2.
\end{align}
Note that these equations and the conditions $d_1 b_1^2, d_2 b_2^2 \ll X^{1/4 + 2 \alpha}$ imply that we also have
$d_3 b_3^2, d_4 b_4^2 \ll X^{1/4 + 2 \alpha}$. Moreover, we have
\begin{align}
\label{System 3'}
2 d_2 b_2^2 & = d_3 b_3^2 + d_4 b_4^2, \\
\label{System 4'}
2 d_1 b_1^2 & = - d_3 b_3^2 + d_4 b_4^2.
\end{align}
We have
\begin{equation*}
\mathcal{N}_{\alpha}^{\ast}(-1,0;X) \ll \sum_{\substack{D_i, B_i \\ i \in \{ 1, 2, 3, 4\}}} \mathcal{N},
\end{equation*}
where the sum is over the $D_i$, $B_i$, $i \in \{ 1, 2, 3, 4 \}$, satisfying
\begin{align}
\label{condition 1}
D_1 D_2 D_3 D_4 & \leq X, \\
\label{condition 2}
D_i B_i^2 & \ll X^{1/4 + 2 \alpha},
\end{align}
for $i \in \{ 1, 2, 3, 4 \}$.

For fixed $(d_1, d_2, b_1, b_2) \in \mathbb{Z}_{\geq 1}^4$, there is at most one $(d_4, b_4) \in \mathbb{Z}_{\geq 1}^2$ satisfying the equation \eqref{System 2'} since $d_4$ is squarefree. Note that the condition $\gcd(d_1 b_1, d_2 b_2) = 1$ and the equation \eqref{System 1'} imply that we actually have $\gcd(d_i b_i, d_j b_j) = 1$ for $i, j \in \{1, 2, 3 \}$, $i \neq j$.  Applying lemma \ref{geometry lemma} to count the number of $(d_1, d_2, d_3, b_1, b_2, b_3) \in \mathbb{Z}_{\geq 1}^6$ satisfying $D_i < d_i \leq 2 D_i$, $B_i \leq b_i \leq 2 B_i$ for $i \in \{ 1, 2, 3 \}$, $\gcd(d_i b_i, d_j b_j) = 1$ for
$i, j \in \{1, 2, 3 \}$, $i \neq j$, and the equation \eqref{System 1'}, we get
\begin{equation}
\label{Upper bound 1}
\mathcal{N} \ll X^{\varepsilon} (D_1 D_2 D_3)^{2/3} (B_1 B_2 B_3)^{1/3}.
\end{equation}
Similarly, using also the equations \eqref{System 3'} and \eqref{System 4'}, we obtain
\begin{equation}
\label{Upper bound 2}
\mathcal{N} \ll X^{\varepsilon} (D_1 D_2 D_4)^{2/3} (B_1 B_2 B_4)^{1/3},
\end{equation}
and also
\begin{equation}
\label{Upper bound 3}
\mathcal{N} \ll X^{\varepsilon} (D_1 D_3 D_4)^{2/3} (B_1 B_3 B_4)^{1/3},
\end{equation}
and finally
\begin{equation}
\label{Upper bound 4}
\mathcal{N} \ll X^{\varepsilon} (D_2 D_3 D_4)^{2/3} (B_2 B_3 B_4)^{1/3}.
\end{equation}
Note that we could have $\gcd(d_3 b_3, d_4 b_4) = 2$ but this does not change anything in the application of lemma
\ref{geometry lemma}. Combining the four upper bounds \eqref{Upper bound 1}, \eqref{Upper bound 2}, \eqref{Upper bound 3} and
\eqref{Upper bound 4}, we get
\begin{equation*}
\mathcal{N} \ll X^{\varepsilon} (D_1 D_2 D_3 D_4)^{1/2} (B_1 B_2 B_3 B_4)^{1/4}.
\end{equation*}
Summing successively over $B_i$, $i \in \{ 1, 2, 3, 4 \}$, using the condition \eqref{condition 2}, and over $D_4$ using the condition \eqref{condition 1}, we obtain
\begin{align*}
\mathcal{N}_{\alpha}^{\ast}(-1,0;X) & \ll X^{\varepsilon} \sum_{\substack{D_i, B_i \\ i \in \{ 1, 2, 3, 4\}}}
(D_1 D_2 D_3 D_4)^{1/2} (B_1 B_2 B_3 B_4)^{1/4} \\
& \ll X^{1/8 + \alpha + \varepsilon} \sum_{\substack{D_i \\ i \in \{ 1, 2, 3, 4 \}}}
(D_1 D_2 D_3 D_4)^{3/8} \\
& \ll X^{1/2 + \alpha + \varepsilon} \sum_{\substack{D_i \\ i \in \{ 1, 2, 3 \}}} 1 \\
& \ll X^{1/2 + \alpha + 2 \varepsilon},
\end{align*}
as wished.
\end{proof}

\bibliographystyle{amsalpha}
\bibliography{biblio}

\newcommand{\etalchar}[1]{$^{#1}$}
\providecommand{\bysame}{\leavevmode\hbox to3em{\hrulefill}\thinspace}
\providecommand{\MR}{\relax\ifhmode\unskip\space\fi MR }
% \MRhref is called by the amsart/book/proc definition of \MR.
\providecommand{\MRhref}[2]{%
  \href{http://www.ams.org/mathscinet-getitem?mr=#1}{#2}
}
\providecommand{\href}[2]{#2}
\begin{thebibliography}{BKL{\etalchar{+}}13}

\bibitem[BFH90]{MR1038358}
D.~Bump, S.~Friedberg, and J.~Hoffstein, \emph{Eisenstein series on the
  metaplectic group and nonvanishing theorems for automorphic {$L$}-functions
  and their derivatives}, Ann. of Math. (2) \textbf{131} (1990), no.~1,
  53--127.

\bibitem[BKL{\etalchar{+}}13]{Bhargava}
M.~Bhargava, D.~M. Kane, H.~W. Lenstra, Jr., B.~Poonen, and E.~Rains,
  \emph{Modeling the distribution of ranks, {S}elmer groups, and
  {S}hafarevich-{T}ate groups of elliptic curves}, Preprint, arXiv:1304.3971v2
  (2013).

\bibitem[CKRS02]{MR1956231}
J.~B. Conrey, J.~P. Keating, M.~O. Rubinstein, and N.~C. Snaith, \emph{On the
  frequency of vanishing of quadratic twists of modular {$L$}-functions},
  Number theory for the millennium, {I} ({U}rbana, {IL}, 2000), A K Peters,
  Natick, MA, 2002, pp.~301--315.

\bibitem[CL84]{MR756082}
H.~Cohen and H.~W. Lenstra, Jr., \emph{Heuristics on class groups of number
  fields}, Number theory, {N}oordwijkerhout 1983 ({N}oordwijkerhout, 1983),
  Lecture Notes in Math., vol. 1068, Springer, Berlin, 1984, pp.~33--62.

\bibitem[Dat93]{MR1210518}
B.~A. Datskovsky, \emph{A mean-value theorem for class numbers of quadratic
  extensions}, A tribute to {E}mil {G}rosswald: number theory and related
  analysis, Contemp. Math., vol. 143, Amer. Math. Soc., Providence, RI, 1993,
  pp.~179--242.

\bibitem[Del01]{MR1837670}
C.~Delaunay, \emph{Heuristics on {T}ate-{S}hafarevitch groups of elliptic
  curves defined over {$\mathbb{Q}$}}, Experiment. Math. \textbf{10} (2001),
  no.~2, 191--196.

\bibitem[Del05]{MR2173383}
\bysame, \emph{Moments of the orders of {T}ate-{S}hafarevich groups}, Int. J.
  Number Theory \textbf{1} (2005), no.~2, 243--264.

\bibitem[Del07]{MR2322355}
\bysame, \emph{Heuristics on class groups and on {T}ate-{S}hafarevich groups:
  the magic of the {C}ohen-{L}enstra heuristics}, Ranks of elliptic curves and
  random matrix theory, London Math. Soc. Lecture Note Ser., vol. 341,
  Cambridge Univ. Press, Cambridge, 2007, pp.~323--340.

\bibitem[FJ13a]{MR3069056}
{\'E}.~Fouvry and F.~Jouve, \emph{A positive density of fundamental
  discriminants with large regulator}, Pacific J. Math. \textbf{262} (2013),
  no.~1, 81--107.

\bibitem[FJ13b]{MR3030681}
\bysame, \emph{Size of regulators and consecutive square-free numbers}, Math.
  Z. \textbf{273} (2013), no.~3-4, 869--882.

\bibitem[Gol79]{MR564926}
D.~Goldfeld, \emph{Conjectures on elliptic curves over quadratic fields},
  Number theory, {C}arbondale 1979 ({P}roc. {S}outhern {I}llinois {C}onf.,
  {S}outhern {I}llinois {U}niv., {C}arbondale, {I}ll., 1979), Lecture Notes in
  Math., vol. 751, Springer, Berlin, 1979, pp.~108--118.

\bibitem[Gre92]{MR1150469}
G.~Greaves, \emph{Power-free values of binary forms}, Quart. J. Math. Oxford
  Ser. (2) \textbf{43} (1992), no.~169, 45--65.

\bibitem[Hoo84]{MR765829}
C.~Hooley, \emph{On the {P}ellian equation and the class number of indefinite
  binary quadratic forms}, J. Reine Angew. Math. \textbf{353} (1984), 98--131.

\bibitem[Iwa90]{MR1081731}
H.~Iwaniec, \emph{On the order of vanishing of modular {$L$}-functions at the
  critical point}, S\'em. Th\'eor. Nombres Bordeaux (2) \textbf{2} (1990),
  no.~2, 365--376.

\bibitem[KS99]{MR1659828}
N.~M. Katz and P.~Sarnak, \emph{Random matrices, {F}robenius eigenvalues, and
  monodromy}, American Mathematical Society Colloquium Publications, vol.~45,
  American Mathematical Society, Providence, RI, 1999.

\bibitem[Lan83]{MR717593}
S.~Lang, \emph{Conjectured {D}iophantine estimates on elliptic curves},
  Arithmetic and geometry, {V}ol. {I}, Progr. Math., vol.~35, Birkh\"auser
  Boston, Boston, MA, 1983, pp.~155--171.

\bibitem[LB13]{Bihomogeneous}
P.~Le~Boudec, \emph{Density of rational points on a certain smooth
  bihomogeneous threefold}, Preprint, arXiv:1308.0033v1 (2013).

\bibitem[MM91]{MR1109350}
M.~R. Murty and V.~K. Murty, \emph{Mean values of derivatives of modular
  {$L$}-series}, Ann. of Math. (2) \textbf{133} (1991), no.~3, 447--475.

\bibitem[Reu14]{Reuss}
T.~Reuss, \emph{Pairs of $k$-free numbers, consecutive square-full numbers},
  Preprint, arXiv:1212.3150v2 (2014).

\bibitem[Sar82]{MR675187}
P.~Sarnak, \emph{Class numbers of indefinite binary quadratic forms}, J. Number
  Theory \textbf{15} (1982), no.~2, 229--247.

\bibitem[Sar85]{MR814010}
\bysame, \emph{Class numbers of indefinite binary quadratic forms. {II}}, J.
  Number Theory \textbf{21} (1985), no.~3, 333--346.

\bibitem[Sie44]{MR0012642}
C.~L. Siegel, \emph{The average measure of quadratic forms with given
  determinant and signature}, Ann. of Math. (2) \textbf{45} (1944), 667--685.

\bibitem[Sil84]{MR747871}
J.~H. Silverman, \emph{Lower bounds for height functions}, Duke Math. J.
  \textbf{51} (1984), no.~2, 395--403.

\end{thebibliography}

\end{document}